\renewcommand{\mathbbm}{\mathbb}
\DeclareMathOperator{\pr}{pr}
\DeclareMathOperator{\id}{id}
\DeclareMathOperator{\Hom}{Hom}
\DeclareMathOperator{\Coh}{Coh}
\DeclareMathOperator{\D}{D}
\DeclareMathOperator{\K}{K}
\DeclareMathOperator{\Hilb}{Hilb}
\DeclareMathOperator{\glob}{\Gamma}
\DeclareMathOperator{\triv}{\mathsf{triv}}
\DeclareMathOperator{\Inf}{\mathsf{Inf}}
\DeclareMathOperator{\Res}{\mathsf{Res}}
\newcommand{\Fock}{\mathbb F}
\newcommand{\sym}{\mathfrak S}
\newcommand{\C}{\mathbbm C}
\newcommand{\N}{\mathbbm N}
\newcommand{\Q}{\mathbbm Q}
\newcommand{\ID}{\mathbbm D}
\newcommand{\Ik}{\mathsf k}
\newcommand{\Z}{\mathbbm Z}
\newcommand{\cE}{\mathcal E}
\newcommand{\cX}{\mathcal X}
\newcommand{\cR}{\mathcal R}
\newcommand{\cK}{\mathcal K}
\newcommand{\Heis}{\mathsf H}
\newcommand{\uk}{\underline{k}}
\newcommand{\kcomp}{\overline{k}}
\newcommand{\n}{\underline{n}}
\newcommand{\m}{\underline{m}}
\newcommand{\ncomp}{\overline{n}}
\newcommand{\mcomp}{\overline{m}}
\newcommand{\mncomp}{\overline{n+m}}
\newcommand{\Icomp}{\overline{I}}
\newcommand{\cT}{\mathcal T}
\newcommand{\alt}{\mathfrak a}
\newcommand{\reg}{\mathcal O}
\renewcommand{\theta}{\vartheta}
\renewcommand{\rho}{\varrho}
\renewcommand{\phi}{\varphi}
\renewcommand{\_}{\underline{\,\,\,\,}}
\newtheorem{theorem}{Theorem}[section]
\newtheorem{lemma}[theorem]{Lemma}
\newtheorem{cor}[theorem]{Corollary}
\theoremstyle{definition}
\newtheorem{defin}[theorem]{Definition}
\newtheorem{remark}[theorem]{Remark}
\begin{document}
\title{Symmetric quotient stacks and Heisenberg actions}
\author{Andreas Krug}
\address{Mathematics Institute, University of Warwick\\Zeeman Building, CV4 7AL\\Coventry, UK}
\email{a.krug@warwick.ac.uk}
\begin{abstract}
For every smooth projective variety $X$, we construct an action of the Heisenberg algebra on the direct sum of the Grothendieck groups of 
all the symmetric quotient stacks $[X^n/\sym_n]$ which contains the Fock space as a subrepresentation. The action is induced by functors on the level of the derived categories which form a weak categorification of the action. 
\end{abstract}
\maketitle
\section{Introduction}
A celebrated theorem of Nakajima \cite{Nak} and Grojnowski \cite{Groj} identifies the cohomology of Hilbert schemes of points on a surface with the Fock space representation of the Heisenberg Lie algebra associated to the cohomology of the surface itself.

One would like to lift the Heisenberg action to the level of the Grothendieck groups or, even better, the derived categories of the Hilbert schemes. Schiffmann and Vasserot \cite{SV} as well as Feigin and Tsymbaliuk \cite{FT} constructed a Heisenberg action on the Grothendieck groups in the case of the affine plane. In \cite{CL}, Cautis and Licata constructed a categorical Heisenberg action on the derived categories of the Hilbert schemes in the case that the surface is a minimal resolution of a Kleinian singularity. Their construction makes use of the derived McKay correspondence between the derived category  of the Hilbert scheme of points on a smooth quasi-projective surface and the derived category  of the symmetric quotient stack associated to the surface; see \cite{BKR} and \cite{Hai}. 

In this paper, we generalise the construction of \cite{CL}     
to obtain functors between the derived categories of the symmetric quotient stacks of an arbitrary smooth complete variety which descend to a Heisenberg action on the Grothendieck groups of the symmetric quotient stacks. Our construction can also be seen as a global version of a construction of Khovanov \cite{Khov} which deals with the case that the variety is a point. Note, however, that there are deeper categorical structures in \cite{CL} as well as \cite{Khov} that we do not generalise; see also Section \ref{open}.

Our construction is much closer to the construction of \cite{Groj} (see also \cite[Ch.\ 9]{Nakbook}) than to that of \cite{Nak}. In some sense, what our paper does is to fill in the proof of the claim made in \cite[footnote 3]{Groj}, though while working on the higher level of the derived categories.
\subsection{Generators of the Heisenberg algebra}\label{Heisgen}
Let $V$ be a vector space  
over $\mathbb Q$ (not necessarily  of finite dimension) with a bilinear form $\langle\_,\_\rangle$. Set $L_V= V^{\Z\setminus \{0\}}$ and denote the image of $\beta \in V$ in the $n$-th factor of $L_V$ by $a_\beta(n)$ for $n\in \Z\setminus \{0\}$. The \textit{Heisenberg algebra $\Heis_V$} associated to $(V,\langle \_,\_\rangle)$ is the unital $\Q$-algebra given by the tensor algebra $T(L_V)$ modulo the relations  
\begin{align}\label{originalrel}
 [a_\alpha(m),a_\beta(n)]=\delta_{m,-n} n\langle \alpha,\beta\rangle \,.
\end{align}
Note that, with our definition, the Heisenberg algebra is the quotient of the universal enveloping algebra $U(\mathfrak h_V)$ of the Heisenberg Lie algebra associated to $(V,\langle\_,\_\rangle)$ where the central charge is identified with 1. Hence, every module over the Heisenberg algebra $\Heis_V$ yields a representation of the Heisenberg Lie algebra $\mathfrak h_V$. In particular, the Fock space representation is induced from a $\Heis_V$-module; see Section \ref{Fock}. 

We define elements $p^{(n)}_\beta$ and $q^{(n)}_\beta$ for $\beta\in V$ and $n$ a non-negative integer by the formulae
\begin{align}\label{pqdefin}
\sum_{n\ge 0}p^{(n)}_\beta z^n=\exp\left( \sum_{\ell \ge 1} \frac{a_\beta(-\ell)}{\ell} z^\ell\right)\quad,\quad \sum_{n\ge 0}q^{(n)}_\beta z^n=\exp\left(\sum_{\ell \ge 1} \frac{a_\beta(\ell)}{\ell} z^\ell\right)\,.
\end{align}
As $\beta$ runs through a basis of $V$ and $n$ through the positive integers, the elements $q_\beta^{(n)}$ and $p_\alpha^{(n)}$ together form a set of generators of $\Heis_V$. In order to describe their relations, we introduce the following notation. 
\begin{defin}
For $\chi\in \mathbb Q$ and $k$ a non-negative integer, we set
\[
s^k\chi:=\binom{\chi +k-1}{k}:=\frac 1{k!}(\chi+k-1)(\chi+k-2)\cdots (\chi+1)\chi\,.
\]
\end{defin}
\begin{lemma}\label{relationlem}
The relations among the above generators are given by 
\begin{align}
&[q^{(m)}_\alpha,q^{(n)}_\beta]=0=[p^{(m)}_\alpha,p^{(n)}_\beta]\,,\label{trivrel}\\
&q^{(m)}_\alpha p^{(n)}_\beta=\sum_{k=0}^{\min\{m,n\}} s^k\langle \alpha,\beta\rangle \cdot p^{(n-k)}_\beta q^{(m-k)}_\alpha\,. \label{nontrivrel}
\end{align}
\end{lemma}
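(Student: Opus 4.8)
The plan is to package the generators into generating functions and reduce everything to a single normal-ordering (vertex-operator) identity obtained from the Baker--Campbell--Hausdorff formula.

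The relations (\ref{trivrel}) are the easy part. By (\ref{originalrel}) one has $[a_\alpha(-k),a_\beta(-\ell)]=0$ for all $k,\ell\ge 1$, since $\delta_{-k,\ell}=0$; likewise $[a_\alpha(k),a_\beta(\ell)]=0$. Hence all the ``creation'' operators commute among themselves and all the ``annihilation'' operators commute among themselves. Consequently the two defining exponentials in (\ref{pqdefin}) built from commuting arguments commute with one another: writing $P_\alpha(z)=\sum_n p^{(n)}_\alpha z^n$ and $P_\beta(w)=\sum_n p^{(n)}_\beta w^n$, we get $P_\alpha(z)P_\beta(w)=P_\beta(w)P_\alpha(z)$ in $\Heis_V[[z,w]]$, and comparing the coefficient of $z^m w^n$ gives $[p^{(m)}_\alpha,p^{(n)}_\beta]=0$; the same argument applied to the $q$'s gives the other half of (\ref{trivrel}).

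For (\ref{nontrivrel}) I would set $Q_\alpha(z)=\sum_m q^{(m)}_\alpha z^m=\exp(A)$ with $A=\sum_{\ell\ge1}\frac{a_\alpha(\ell)}{\ell}z^\ell$, and $P_\beta(w)=\exp(B)$ with $B=\sum_{\ell\ge1}\frac{a_\beta(-\ell)}{\ell}w^\ell$, both regarded as elements of a suitable completion of $\Heis_V[[z,w]]$ (note $A$ and $B$ have no constant term, so the exponentials and all subsequent series are well defined coefficientwise). The key computation is the commutator $[A,B]=\sum_{k,\ell\ge1}\frac{z^k w^\ell}{k\ell}[a_\alpha(k),a_\beta(-\ell)]$, which by (\ref{originalrel}) collapses to a scalar (hence central) power series in $zw$; summing the resulting logarithmic series gives $[A,B]=-\langle\alpha,\beta\rangle\log(1-zw)$, the precise sign of the exponent being fixed by the convention in (\ref{originalrel}). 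Because $[A,B]$ is central, the Baker--Campbell--Hausdorff formula degenerates to $e^A e^B=e^B e^A e^{[A,B]}$, so that $Q_\alpha(z)P_\beta(w)=(1-zw)^{-\langle\alpha,\beta\rangle}\,P_\beta(w)Q_\alpha(z)$.

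It then remains to expand this identity and compare coefficients. Using the negative binomial series $(1-t)^{-\chi}=\sum_{k\ge0}\binom{\chi+k-1}{k}t^k=\sum_{k\ge0}s^k\chi\,t^k$ with $t=zw$ and $\chi=\langle\alpha,\beta\rangle$, and multiplying by $P_\beta(w)Q_\alpha(z)=\sum_{n,m}p^{(n)}_\beta q^{(m)}_\alpha w^n z^m$, the coefficient of $z^m w^n$ on the right-hand side is exactly $\sum_{k=0}^{\min\{m,n\}}s^k\langle\alpha,\beta\rangle\,p^{(n-k)}_\beta q^{(m-k)}_\alpha$, while on the left it is $q^{(m)}_\alpha p^{(n)}_\beta$; this is (\ref{nontrivrel}). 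I expect the main obstacle to be the middle step: justifying the formal manipulations in the completed algebra (well-definedness of $e^A$, $e^B$ and the BCH reduction) and, above all, carrying out the $[A,B]$ computation with the correct sign and recognising the resulting scalar as a power of $(1-zw)$, so that the combinatorial coefficients match $s^k\langle\alpha,\beta\rangle$ on the nose rather than up to sign.
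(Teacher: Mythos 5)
Your verification half is correct and is essentially the paper's own argument: both parts of (\ref{trivrel}) follow because positive modes commute among themselves and negative modes commute among themselves, and (\ref{nontrivrel}) follows by packaging the generators into exponential generating functions, noting that the commutator of the two exponents is a central series equal to $-\langle\alpha,\beta\rangle\log(1-zw)$, applying the degenerate Baker--Campbell--Hausdorff identity, expanding $(1-zw)^{-\langle\alpha,\beta\rangle}$ by the binomial series, and comparing coefficients. (A pedantic remark on the sign you worry about: reading (\ref{originalrel}) literally, with the factor $n$ attached to the second argument, the commutator computation gives $+\langle\alpha,\beta\rangle\log(1-zw)$; the paper's own computation attaches the factor to the first index and lands on the sign you state, so you share exactly the paper's convention and there is no discrepancy in the end result.)

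However, there is a genuine gap: the lemma asserts that (\ref{trivrel}) and (\ref{nontrivrel}) \emph{are} the relations among these generators, i.e.\ that they form a complete set of defining relations, so that the algebra presented by symbols $q^{(n)}_\beta$, $p^{(n)}_\alpha$ modulo (\ref{trivrel}) and (\ref{nontrivrel}) maps isomorphically onto $\Heis_V$. Your proposal proves only that the relations hold (so the map is well defined; surjectivity is clear since the $a_\beta(n)$ can be recovered from (\ref{pqdefin})), but not that there are no further relations, i.e.\ not injectivity. This completeness is precisely what the paper needs later: Theorem \ref{mainthm} produces operators on $\mathbb K$ satisfying (\ref{trivrel}) and (\ref{nontrivrel}), and only because these relations present $\Heis_{\K(X)}$ does one obtain an actual $\Heis_{\K(X)}$-action on $\mathbb K$. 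The paper devotes the entire second half of its proof to this point: using (\ref{trivrel}) and (\ref{nontrivrel}) any word in the generators can be normally ordered into a linear combination of monomials $p(\nu)q(\mu)$ (all $p$'s to the left, indexed by tuples of partitions $\nu,\mu$); the normally ordered monomials $a(-\nu)a(\mu)$ in the original generators form a vector-space basis of $\Heis_V$; and by (\ref{pqdefin}) each $p^{(n)}_\beta$ equals $a_\beta(-n)$ plus a combination of terms corresponding to strictly finer partitions of $n$, so that $p(\nu)q(\mu)=a(-\nu)a(\mu)+(\text{terms indexed by strictly finer pairs})$. A maximal-term triangularity argument with respect to this coarsening order then shows the $p(\nu)q(\mu)$ are linearly independent in $\Heis_V$, which is what finishes the proof. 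You would need to supply an argument of this kind (or some other proof of the linear independence of the normally ordered monomials in the $p$'s and $q$'s) for your proposal to establish the lemma as stated.
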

We will prove Lemma \ref{relationlem} in Section \ref{lemproofsect}.
Note the following simple but important fact.
\begin{lemma}\label{symmetricEuler}
Let $W^*$ be a finite-dimensional graded vector space. Then the Euler characteristic of its symmetric product $S^kW^*$ (formed in the graded sense) is given by
\begin{align*}
 \chi(S^kW^*)=s^k(\chi(W^*))\,.
\end{align*}    
\end{lemma}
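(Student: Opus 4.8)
The plan is to package the numbers $\chi(S^kW^*)$ into a single generating series in an auxiliary variable $z$ tracking the symmetric degree $k$, and to identify this series with a negative binomial expansion. The crucial structural input is that the graded symmetric power is \emph{exponential}: for a direct sum $W^* \cong A \oplus B$ there is a natural isomorphism $S^kW^* \cong \bigoplus_{a+b=k} S^aA \otimes S^bB$ (with Koszul signs), so that the series $F(W^*):=\sum_{k\ge 0}\chi(S^kW^*)\,z^k$ is multiplicative, $F(A\oplus B)=F(A)\,F(B)$. Here I use that the Euler characteristic is additive on direct sums and multiplicative on tensor products. Decomposing $W^*$ into one-dimensional graded pieces thus reduces the whole computation to the case $\dim W^*=1$.

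For a one-dimensional piece there are exactly two cases, and the sign rule makes them differ. If the line sits in even cohomological degree it behaves as an ordinary polynomial generator: $S^kW^*$ is one-dimensional and of even total degree for every $k$, contributing $F = \sum_{k\ge0} z^k = (1-z)^{-1}$. If the line sits in odd degree the Koszul sign forces $S^{\ge 2}W^*=0$, so only $S^0$ and $S^1$ survive, the latter in odd degree; this contributes $F = 1 - z$. Writing $e$ and $o$ for the total dimensions of the even and odd parts of $W^*$, so that $\chi(W^*)=e-o$, multiplicativity gives $F(W^*)=(1-z)^{-e}(1-z)^{o}=(1-z)^{-\chi(W^*)}$.

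It remains to extract the coefficient of $z^k$. By the generalized binomial theorem, $(1-z)^{-\chi}=\sum_{k\ge0}\binom{\chi+k-1}{k}z^k=\sum_{k\ge0}s^k(\chi)\,z^k$, and comparing coefficients of $z^k$ yields $\chi(S^kW^*)=s^k(\chi(W^*))$, exactly matching the definition of $s^k$. The only genuine obstacle is the careful bookkeeping of Koszul signs underlying the exponential isomorphism above: one has to be sure that in the graded symmetric power an odd-degree generator really behaves anti-symmetrically, so that its higher powers vanish and $S^1$ enters with a minus sign, while an even-degree generator behaves symmetrically. Once this super-vector-space formalism is in place, the remaining manipulations are purely formal power-series identities.
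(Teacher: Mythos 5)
Your proof is correct. One point to be aware of when comparing with the paper: the paper does not prove Lemma \ref{symmetricEuler} at all --- it is introduced as a ``simple but important fact'' and used without justification --- so your argument fills in an omitted verification rather than paralleling or diverging from an existing one. The route you take is the natural one, and every step checks out: the exponential property $S^k(A\oplus B)\cong\bigoplus_{a+b=k}S^aA\otimes S^bB$ holds for graded symmetric powers with Koszul signs; the Euler characteristic is indeed additive on sums and multiplicative on tensor products of finite-dimensional graded vector spaces; and the two one-dimensional cases are handled correctly (an even line contributes $(1-z)^{-1}$, while an odd line has vanishing higher symmetric powers in characteristic zero, since $x^2=-x^2$ forces $x^2=0$, contributing $1-z$). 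Your final coefficient extraction is precisely the binomial identity that the paper itself records as equation (\ref{c}) in its proof of Lemma \ref{relationlem}, so your argument also makes transparent why the same coefficients $s^k\chi$ appear on both the algebra side and the categorified side.
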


\subsection{Construction and results}\label{constr}
Let $X$ be a smooth complete variety over a field $\Ik$ of characteristic zero. For a non-negative integer $\ell$, we consider the cartesian product $X^\ell$ together with the natural action of the symmetric group $\sym_\ell$ given by permuting the factors. The associated quotient stack $[X^\ell/\sym_\ell]$ is called the \textit{$\ell$-th symmetric quotient stack}. Its derived category can equivalently be described as the $\sym_\ell$-equivariant derived category of the cartesian product, that means $\D([X^\ell/\sym_\ell])\cong \D_{\sym_\ell}(X^\ell)$. We set 
\begin{align*}
 \ID:= \bigoplus_{\ell\ge 0} \D_{\sym_\ell}(X^\ell)
\end{align*} 
For $1\le n\le N$ and $\beta\in \D(X)$ we define the functor 
\begin{align}\label{Pdefin}
P^{(n)}_{N,\beta}\colon \D_{\sym_{N-n}}(X^{N-n})\to \D_{\sym_{N}}(X^{N})\quad,\quad E\mapsto \Inf_{\sym_n\times \sym_{N-n}}^{\sym_N}\bigl(\beta^{\boxtimes n}\boxtimes E  \bigr)\,.  
\end{align}
Note that for $E\in \D_{\sym_{N-n}}(X^{N-n})$ we can consider $\beta^{\boxtimes n}\boxtimes E$ canonically as an object of $\D_{\sym_n\times \sym_{N-n}}(X^N)$.
The \textit{inflation} functor $\Inf_{\sym_n\times \sym_{N-n}}^{\sym_N}\colon \D_{\sym_n\times \sym_{N-n}}(X^N)\to\D_{\sym_N}(X^N)$ is the adjoint of the forgetful functor; see Section \ref{equibasics} for details on the functor $\Inf$ and Section \ref{PQdesc} for details on the functor $P^{(n)}_{N,\beta}$. We set 
\begin{align*}
P^{(n)}_\beta:=\bigoplus_{N\ge n} P^{(n)}_{N,\beta}\colon \ID\to \ID \quad\text{for $n\ge 1$,}\quad P^{(0)}_\beta:=\id\colon \ID\to \ID\,. 
\end{align*}
Finally, we define $Q^{(n)}_\beta\colon \ID\to \ID$ as the right adjoint of $P^{(n)}_\beta$.  
\begin{theorem}\label{mainthm}
For every $\alpha,\beta\in \D(X)$ and $n,m\in \N$, we have the relations
\begin{align}
& Q^{(m)}_\alpha Q^{(n)}_\beta\cong Q^{(n)}_\beta Q^{(m)}_\alpha\quad,\quad P^{(m)}_\alpha P^{(n)}_\beta\cong P^{(n)}_\beta P^{(m)}_\alpha \label{trivialfunctorrel} \\
&  Q^{(m)}_\alpha P^{(n)}_\beta\cong \bigoplus_{k=0}^{\min\{m,n\}} S^k\Hom^*(\alpha,\beta)\otimes_\Ik P^{(n-k)}_\beta Q^{(m-k)}_\alpha\,.\label{nontrivialfunctorrel}   
\end{align}
\end{theorem}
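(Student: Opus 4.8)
The plan is to establish the commuting relations (\ref{trivialfunctorrel}) by a direct computation and the main relation (\ref{nontrivialfunctorrel}) by a geometric Mackey decomposition. Throughout I fix the level $N$ and check the corresponding component of each functor, the outcome being uniform in $N$.

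For the first relation in (\ref{trivialfunctorrel}), I would compute both composites using the explicit form of $P^{(n)}_\beta$ and the transitivity of inflation. Since box product with a fixed object commutes with inflation on the remaining factors, induction in stages gives $P^{(m)}_\alpha P^{(n)}_\beta(E)\cong\Inf^{\sym_N}_{\sym_m\times\sym_n\times\sym_{N-m-n}}(\alpha^{\boxtimes m}\boxtimes\beta^{\boxtimes n}\boxtimes E)$ and, symmetrically, $P^{(n)}_\beta P^{(m)}_\alpha(E)\cong\Inf^{\sym_N}_{\sym_n\times\sym_m\times\sym_{N-m-n}}(\beta^{\boxtimes n}\boxtimes\alpha^{\boxtimes m}\boxtimes E)$. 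The two Young subgroups are conjugate in $\sym_N$ by the permutation $\sigma$ interchanging the first $m$ coordinates with the following $n$, and $\sigma_*$ carries one box product to the other; since $\Inf^{\sym_N}_{H}\cong\Inf^{\sym_N}_{\sigma H\sigma^{-1}}\circ\sigma_*$, the two composites are isomorphic. The relation for the $Q$'s then follows formally by passing to right adjoints, using $(FG)^R\cong G^RF^R$ and the uniqueness of adjoints.

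For (\ref{nontrivialfunctorrel}) I would first invoke the explicit description of $Q^{(m)}_\alpha$ from Section \ref{PQdesc}: being right adjoint to $P^{(m)}_\alpha=\Inf^{\sym_N}_{\sym_m\times\sym_{N-m}}\circ(\alpha^{\boxtimes m}\boxtimes-)$ and using the biadjunction between $\Inf$ and $\For$ in characteristic zero, it is the composite of $\For^{\sym_N}_{\sym_m\times\sym_{N-m}}$ with the right adjoint of $\alpha^{\boxtimes m}\boxtimes(-)$, the latter taking derived $\Rihom$ against $\alpha$ on each of the first $m$ coordinates and integrating over $X$. Substituting $P^{(n)}_\beta(E)=\Inf^{\sym_N}_{\sym_n\times\sym_{N-n}}(\beta^{\boxtimes n}\boxtimes E)$, the core of the computation is the restriction of an induced object, $\For^{\sym_N}_{\sym_m\times\sym_{N-m}}\Inf^{\sym_N}_{\sym_n\times\sym_{N-n}}$, to which I would apply the geometric Mackey formula. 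The relevant double cosets $(\sym_m\times\sym_{N-m})\backslash\sym_N/(\sym_n\times\sym_{N-n})$ are classified by the $(1,1)$-entry $k$ of the $2\times2$ nonnegative integer matrix with row sums $(m,N-m)$ and column sums $(n,N-n)$, hence by $0\le k\le\min\{m,n\}$ once $N\ge m+n$; the associated intersection is the Young subgroup $\sym_k\times\sym_{m-k}\times\sym_{n-k}\times\sym_{N-m-n+k}$. The Mackey summand indexed by $k$ splits the coordinates into four blocks: $k$ carrying both an $\alpha$ (from $Q$) and a $\beta$ (from $P$), $m-k$ carrying only $\alpha$, $n-k$ carrying only $\beta$, and the rest carrying $E$. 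Applying the $\Rihom$-and-integrate part of $Q^{(m)}_\alpha$, each overlap coordinate contributes $\Hom^*(\alpha,\beta)$; the residual $\sym_k$ permuting these identical coordinates forces $\sym_k$-invariants, which in characteristic zero split off as a direct summand and produce the graded symmetric power $S^k\Hom^*(\alpha,\beta)\otimes_\Ik(-)$. The $m-k$ and $n-k$ blocks reassemble exactly into $Q^{(m-k)}_\alpha$ and $P^{(n-k)}_\beta$, and normal ordering (creation to the left) arranges them as $P^{(n-k)}_\beta Q^{(m-k)}_\alpha$; summing over $k$ yields (\ref{nontrivialfunctorrel}).

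I expect the main obstacle to be this geometric Mackey step with full equivariant bookkeeping: writing $\For\circ\Inf$ as a base change around the fiber squares of the relevant powers of $X$, tracking the $\sym_k$-equivariant structure and the Koszul signs on the overlap block so that the invariants genuinely assemble into the graded symmetric power $S^k\Hom^*(\alpha,\beta)$ rather than merely $\Hom^*(\alpha,\beta)^{\otimes k}$, and confirming that the surviving functors combine precisely into $P^{(n-k)}_\beta Q^{(m-k)}_\alpha$. Lemma \ref{symmetricEuler} is the K-theoretic shadow of this symmetric-power computation, matching the coefficient $s^k\langle\alpha,\beta\rangle$ in the algebraic relation of Lemma \ref{relationlem}.
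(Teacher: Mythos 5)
Your proposal is correct and follows essentially the same route as the paper: the geometric Mackey decomposition of $\For\circ\Inf$ into double cosets indexed by the overlap size $k$ is precisely the paper's orbit decomposition (Lemma \ref{Danlemma}) of $Q^{(m)}_\alpha P^{(n)}_\beta$ into summands $T_I$ with $I=\uk\cup J$, with the overlap block yielding $S^k\Hom^*(\alpha,\beta)$ via K\"unneth plus $\sym_k$-invariants and the remaining blocks reassembling into $P^{(n-k)}_\beta Q^{(m-k)}_\alpha$. The only immaterial difference is in relation (\ref{trivialfunctorrel}), where you prove commutativity of the $P$'s directly (inflation in stages plus conjugate Young subgroups) and deduce the $Q$-relation by adjunction, whereas the paper computes the $Q$-side via projection formulas and gets the $P$-side by adjunction.
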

We denote by $\K([X^n/\sym_n])\cong \K_{\sym_n}(X^n)$ the equivariant Grothendieck group with coefficients in $\Q$ and set $\mathbb K:=\bigoplus_{\ell\ge 0}\K_{\sym_n}(X^n)=\K(\mathbb D)$. We consider $\K(X)$ together with the bilinear form $\langle\_,\_\rangle$ given by the Mukai pairing, i.e.\ 
\begin{align}\label{bidefin}
\langle [\alpha],[\beta]\rangle :=\chi(\alpha, \beta):=\chi\bigl(\Hom^*(\alpha,\beta)  \bigr) 
\end{align}
where $[\alpha]$ denotes the class of an object $\alpha\in \D(X)$ in the Grothendieck group $\K(X)$
\begin{cor}
 The descent of the functors $P^{(n)}_\beta$ and $Q^{(n)}_\beta$ to the level of the Grothendieck groups makes $\mathbb K$ into a representation of the Heisenberg algebra $\Heis_{\K(X)}$.
\end{cor}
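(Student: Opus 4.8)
The plan is to pass from the functorial identities of Theorem \ref{mainthm} to the algebraic identities of Lemma \ref{relationlem} by taking Euler characteristics. First I would observe that each functor $P^{(n)}_\beta$ and $Q^{(n)}_\beta$ is exact: $P^{(n)}_\beta$ is the composition of the exact functors $(\beta^{\boxtimes n}\boxtimes -)$ and the inflation functor $\Inf$, and $Q^{(n)}_\beta$ is its right adjoint, which is likewise exact. Hence both induce $\Q$-linear endomorphisms $[P^{(n)}_\beta],[Q^{(n)}_\beta]$ of $\mathbb K=\K(\ID)$, and composition of functors descends to composition of these operators. Choosing objects whose classes form a basis of $\K(X)$, I would set up the action of $\Heis_{\K(X)}$ by sending the generators $p^{(n)}_{[\beta]}\mapsto[P^{(n)}_\beta]$ and $q^{(n)}_{[\beta]}\mapsto[Q^{(n)}_\beta]$, where $\beta$ ranges over the chosen representatives.

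By Lemma \ref{relationlem} these data determine a representation as soon as the images satisfy \eqref{trivrel} and \eqref{nontrivrel}, so the whole proof reduces to verifying those two relations. The commutation relations \eqref{trivrel} are immediate from \eqref{trivialfunctorrel}, since a natural isomorphism of functors descends to an equality of operators on $\mathbb K$. For the remaining relation I would descend \eqref{nontrivialfunctorrel} term by term: the direct sum of functors becomes a sum of operators, and the coefficient functor $S^k\Hom^*(\alpha,\beta)\otimes_\Ik(-)$ --- tensoring by the finite-dimensional graded vector space $W^*:=S^k\Hom^*(\alpha,\beta)$ --- acts on the Grothendieck group as multiplication by the scalar $\chi(W^*)$, because $[W^*\otimes E]=\chi(W^*)\,[E]$ for every object $E$.

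The only real computation is to recognise this scalar as the coefficient in \eqref{nontrivrel}, and this is exactly what Lemma \ref{symmetricEuler} supplies: it gives
\begin{align*}
\chi\bigl(S^k\Hom^*(\alpha,\beta)\bigr)=s^k\bigl(\chi(\Hom^*(\alpha,\beta))\bigr)=s^k\langle[\alpha],[\beta]\rangle\,,
\end{align*}
where the last equality is the definition \eqref{bidefin} of the Mukai pairing. Consequently the descent of \eqref{nontrivialfunctorrel} reads $[Q^{(m)}_\alpha][P^{(n)}_\beta]=\sum_{k=0}^{\min\{m,n\}}s^k\langle[\alpha],[\beta]\rangle\,[P^{(n-k)}_\beta][Q^{(m-k)}_\alpha]$, which is precisely \eqref{nontrivrel}.

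I do not anticipate a genuine obstacle here, since all the substance is already carried by Theorem \ref{mainthm} and Lemma \ref{symmetricEuler}; the corollary is essentially their formal consequence. The two points that still deserve a sentence are that Lemma \ref{relationlem} provides a complete presentation of $\Heis_{\K(X)}$ (so that checking the generating relations is enough to build the representation), and that the resulting action is automatically $\Q$-linear in the class $[\beta]\in\K(X)$ --- not through the $p^{(n)}_{[\beta]},q^{(n)}_{[\beta]}$, which are not linear, but through the primitive generators $a_{[\beta]}(n)$, which are linear in $[\beta]$ by construction of $\Heis_{\K(X)}$.
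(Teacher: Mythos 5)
Your proposal is correct and follows essentially the same route as the paper: the paper's proof likewise defines $p^{(n)}_\beta:=[P^{(n)}_\beta]$ and $q^{(n)}_\beta:=[Q^{(n)}_\beta]$ and notes that, by Lemma \ref{symmetricEuler} and the definition (\ref{bidefin}) of the pairing, the relations of Theorem \ref{mainthm} descend to the relations of Lemma \ref{relationlem}. The points you spell out explicitly --- that a natural isomorphism of exact functors gives an equality of operators, that tensoring with $S^k\Hom^*(\alpha,\beta)$ acts as multiplication by its Euler characteristic, and that Lemma \ref{relationlem} is a complete presentation (so that checking the relations on generators indexed by a basis of $\K(X)$ represented by objects suffices) --- are exactly what the paper's two-sentence proof leaves implicit.
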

\begin{proof}
 Let $p^{(n)}_\beta:=[P^{(n)}_\beta]\colon \mathbb K\to \mathbb K$ and $q^{(n)}_\beta:=[Q^{(n)}_\beta]\colon \mathbb K\to \mathbb K$ be the induced maps on the level of the Grothendieck groups. By Lemma \ref{symmetricEuler} and (\ref{bidefin}), the relations of Theorem \ref{mainthm} translate to the $p^{(n)}_\beta$ and $q^{(n)}_\beta$ satisfying the relations of Lemma \ref{relationlem}.  
\end{proof}

The category $\mathbb D$ together with the functors $Q_\beta^{(n)}$ and $P_\beta^{(n)}$ can be regarded as a categorification of the Heisenberg representation $\mathbb K$. 

Since all the $q^{(n)}_\beta$ vanish on $\mathbb K_0=\K_{\sym_0}(X^0)= \K(\text{point})\cong \mathbb Q$, one gets an embedding of the Fock space representation into $\mathbb K$; see Section \ref{Fock} for details.

We will give the proof of Theorem \ref{mainthm} in Sections \ref{mainproof1} and \ref{mainproof2}.
One main ingredient is the following easy fact.

\begin{lemma}[``Symmetric K\"unneth formula'']\label{symmetricKun}
Let $\alpha,\beta\in \D(X)$ and $k\in \N$. Then
\[
 \Hom^*_{\D_{\sym_k}(X^k)}(\beta^{\boxtimes k},\alpha^{\boxtimes k})\cong S^k \Hom^*(\beta,\alpha)
\]
\end{lemma}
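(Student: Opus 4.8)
The plan is to reduce the equivariant $\Hom$-space to a space of invariants, apply the ordinary Künneth formula factor by factor, and then identify the resulting $\sym_k$-representation as the signed permutation representation whose invariants are by definition the graded symmetric power.

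First I would invoke the standard description of morphisms in the derived category of a finite group in characteristic zero. Since taking $\sym_k$-invariants is exact on $\Ik$-vector spaces, one has a graded isomorphism
$$\Hom^*_{\D_{\sym_k}(X^k)}(\beta^{\boxtimes k}, \alpha^{\boxtimes k}) \cong \bigl(\Hom^*_{\D(X^k)}(\beta^{\boxtimes k}, \alpha^{\boxtimes k})\bigr)^{\sym_k},$$
where $\sym_k$ acts on the underlying (non-equivariant) graded $\Hom$-space through the permutation equivariant structures carried by $\beta^{\boxtimes k}$ and $\alpha^{\boxtimes k}$. Next I would apply the derived Künneth formula for external tensor products: over a field, for bounded complexes on smooth complete varieties, $\Hom^*(\mathcal F\boxtimes\mathcal G,\mathcal F'\boxtimes\mathcal G')\cong\Hom^*(\mathcal F,\mathcal F')\otimes_\Ik\Hom^*(\mathcal G,\mathcal G')$, and iterating this $k$ times gives $\Hom^*_{\D(X^k)}(\beta^{\boxtimes k},\alpha^{\boxtimes k})\cong W^{\otimes k}$ as graded vector spaces, where $W:=\Hom^*(\beta,\alpha)$.

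The heart of the argument, and the step I expect to be the main obstacle, is to transport the $\sym_k$-action through this Künneth isomorphism and to show it becomes the \emph{signed} permutation action on $W^{\otimes k}$. Because transpositions generate $\sym_k$, it suffices to treat the swap of two adjacent factors, which reduces to the symmetry constraint of the monoidal structure on $\D(X\times X)$; here the Koszul sign rule produces the factor $(-1)^{|x||y|}$ when two homogeneous classes of degrees $|x|,|y|$ are interchanged. Pinning down that precisely this sign appears — and not an unsigned transposition — is the delicate point, since it is exactly what turns the invariants into the symmetric product taken in the graded (super) sense rather than the ordinary one. Granting this identification, one has $\bigl(W^{\otimes k}\bigr)^{\sym_k}=S^kW$ by the very definition of the graded symmetric power, which yields the claimed isomorphism $\Hom^*_{\D_{\sym_k}(X^k)}(\beta^{\boxtimes k},\alpha^{\boxtimes k})\cong S^k\Hom^*(\beta,\alpha)$ and completes the proof.
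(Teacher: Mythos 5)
Your proof is correct and is precisely the argument the paper has in mind: the paper never writes out a proof of this lemma (it is introduced as an ``easy fact''), and where it is implicitly used in Section \ref{mainproof2} the same three ingredients appear — equivariant $\Hom$'s computed as $\sym_k$-invariants of ordinary $\Hom$'s (exactness of invariants in characteristic zero), the iterated K\"unneth isomorphism $\Hom^*_{\D(X^k)}(\beta^{\boxtimes k},\alpha^{\boxtimes k})\cong \Hom^*(\beta,\alpha)^{\otimes k}$, and the identification of the induced $\sym_k$-action as the Koszul-signed permutation action, whose invariants are by definition the graded symmetric power. You also correctly isolate the one genuine subtlety, namely that the sign coming from the symmetry constraint is what makes the answer $S^k$ in the graded (super) sense — which is exactly what is needed for the lemma to be compatible with Lemma \ref{symmetricEuler}.
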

\subsection{Conclusion}
The reason for the occurrence of Heisenberg actions in the context of  symmetric quotient stacks, and hence also of Hilbert schemes of points on surfaces, can be summarised in the following simple and, in the author's opinion, satisfying way:
\begin{enumerate}
 \item The Heisenberg algebra has generators whose relations involve the numbers $s^n\langle \alpha,\beta\rangle$ as coefficients; see Lemma \ref{relationlem}.
 \item   A ``vector-spaceification'' of these numbers is given by the graded vector spaces $S^n\Hom^*(\alpha,\beta)$; see Lemma \ref{symmetricEuler}.
 \item Because of the symmetric K\"unneth formula (Lemma \ref{symmetricKun}), these graded vector spaces show up very naturally in the context of derived categories of symmetric quotient stacks. In particular, the relatively simple construction (\ref{Pdefin}) yields a categorical action of the Heisenberg algebra.
\end{enumerate}
\subsection{Organisation of the paper}
In Section \ref{lemproofsect} we prove Lemma \ref{relationlem}. Afterwards, we recall some basic facts on equivariant derived categories and functors in Section \ref{equibasics}, introduce some notation in Section \ref{notations}, and describe the functors $P^{(n)}_\beta$ and $Q^{(n)}_\beta$ in some more detail in Section \ref{PQdesc}. In Sections \ref{mainproof1} and \ref{mainproof2}, we give the proof of Theorem \ref{mainthm}. 

We explain in Section \ref{Fock} the embedding of the Fock space representation into $\mathbb K$. In Section \ref{transposed} we define further functors which give a categorification of the action on $\mathbb K$ with respect to the so-called transposed generators of $\Heis_{\K(X)}$. In particular, this recovers the construction of \cite{Khov}.
In Section \ref{gen} we discuss some generalisations and variants, for example to the non-complete case and the case where we replace the category $\D(X)$ by some equivariant category. Finally, we point out two open problems in Section \ref{open}. 
\smallskip
\textbf{Conventions}
With the exception of Section \ref{gen}, $X$ will be a smooth complete variety over a field $\Ik$ of characteristic zero. 
Its \textit{derived category} is understood to be the bounded derived category of coherent sheaves, i.e. $\D(X):=\D^b(\Coh(X))$. All functors between derived categories are assumed to be derived although
this will not be reflected in the notation.  
\smallskip
\textbf{Acknowledgements.} The author was financially supported by the research grant KR 4541/1-1 of the DFG. He thanks Daniel Huybrechts, Ciaran Meachan, David Ploog, Miles Reid, and Pawel Sosna for helpful comments.
\section{Proofs}
\subsection{Proof of Lemma \ref{relationlem}}\label{lemproofsect}
Let $\alpha, \beta \in V$ and $m,n\in \N$. The vanishing of the commutators  
$[q^{(m)}_\alpha,q^{(n)}_\beta]$ and $[q^{(m)}_\alpha,q^{(n)}_\beta]$ follows immediately from the fact that $[a_\alpha(r),a_\beta(s)]=0$ if $r$ and $s$ are both positive or both negative.

For the proof of relation (\ref{nontrivrel}) we follow closely the proof of \cite[Lem.\ 1]{CL}. Set
\[ A(z):=\sum_{\ell \ge 1} \frac{a_\beta(-\ell)}{\ell} z^\ell\quad,\quad B(w):=\sum_{\ell \ge 1} \frac{a_\alpha(\ell)}{\ell} w^\ell
\]
so that $q^{(m)}_\alpha p^{(n)}_\beta=[w^mz^n]\exp(B)\exp(A)$. 
We also set $\chi:=\langle \alpha,\beta\rangle$. 
By relation (\ref{originalrel}), we have 
\begin{align}\label{a}
 [B,A]=\sum_{\ell\ge 1} \frac{\ell\cdot\chi}{\ell^2}w^\ell z^{\ell} =\chi\sum_{\ell\ge 1} \frac{(w z)^\ell}{\ell}=-\chi\log(1-wz)\,.  
\end{align}
In particular, $[B,A]$ commutes with $A$ and $B$. Thus, 
\begin{align}\label{b}
 \exp(B)\exp(A)=\exp([B,A])\exp(A)\exp(B)\,;
\end{align}
see \cite[Lem.\ 9.43]{Nakbook}. Recall the binomial formula
\begin{align}\label{c}
 (1-t)^{-\chi}=\sum_{k\ge 0} s^k\chi\cdot t^k\,.
\end{align}
Together, (\ref{a}), (\ref{b}), and (\ref{c}) give
\[
 \exp(B)\exp(A)=(1-wz)^{-\chi}\exp(A)\exp(B)=\left( \sum_{k\ge 0} s^k\chi\cdot (wz)^k\right) \exp(A)\exp(B)\,.
\]
Comparing the coefficients of $w^mz^n$ on both sides gives relation (\ref{nontrivrel}). 

In order to show that there are no further relations among the generators $q_\beta^{(n)}$ and $p_\beta^{(n)}$ we need to introduce the following notation. 
Let $\{\beta_{i}\}_{i\in I}$ be a basis of $V$. 
Consider maps
\begin{align}\label{map}\nu\colon I\to \coprod_{n\ge 0}\bigl\{\text{partitions of $n$}  \bigr\}\end{align}
such that $\nu(i)\neq 0$ only for a finite number of $i\in I$. We write the partitions in the form $\nu(i)=1^{\nu(i)_1} 2^{\nu(i)_2}\cdots$. We fix a total order on $I$  and set 
\begin{align*}a(\nu):=\prod_{i\in I}\bigl(\prod_{k\ge 1} a_{\beta_i}(k)^{\nu(i)_k}\bigr)\quad&,\quad a(-\nu):=\prod_{i\in I}\bigl(\prod_{k\ge 1} a_{\beta_i}(-k)^{\nu(i)_k}\bigr)\,,\\
q(\nu):=\prod_{i\in I}\bigl(\prod_{k\ge 1} (q_{\beta_i}^{(k)})^{\nu(i)_k}\bigr)\quad&,\quad p(\nu):=\prod_{i\in I}\bigl(\prod_{k\ge 1} (p_{\beta_i}^{(k)})^{\nu(i)_k}\bigr)\,.
\end{align*}
Here, the inner product is formed with respect to the usual order of $\N$ and the outer product with respect to the fixed order of $I$.
Since the only relation between the generators $a_\beta(k)$ is the commutator relation (\ref{originalrel}), the elements $a(-\nu)a(\mu)$ form a basis of $\Heis_V$ as a vector space. In the algebra with generators $q_{\beta_i}^{(n)}$ and $p_{\beta_i}^{(n)}$ and the relations (\ref{trivrel}) and (\ref{nontrivrel}), one can write every element as a linear combination of the form
\begin{align}\label{lincomb}
 \sum_{\nu,\mu} \lambda(\nu,\mu)p(\nu)q(\mu)\quad,\quad \lambda(\mu,\nu)\in \mathbb Q\,.
\end{align}
Thus, it is sufficient to show that the $p(\nu)q(\mu)$ are linearly independent in $\Heis_V$. 
For two maps $\nu,\nu'$ as in (\ref{map}) we say that $\nu$ is coarser than $\nu'$, and write $\nu\succ \nu'$, if $|\nu(i)|=|\nu'(i)|$ and $\nu(i)$ is a coarser partition     
than $\nu'(i)$ for every $i\in I$. For pairs, we set $(\nu,\mu)\succ(\nu',\mu')$ if $\nu\succ\nu'$ and $\mu\succ \mu'$. Note that by (\ref{pqdefin}), we have
\begin{align*}
p^{(n)}_\beta=a_\beta(-n)+ \bigl(\text{linear combination of $a_\beta(-1)^{\nu_1}a_\beta(-2)^{\nu_2}\cdots$ for partitions $\nu$ of $n$}   \bigr)
\end{align*}
and similarly for $q^{(n)}_\beta$. Thus, 
\begin{align}\label{aa}
p(\nu)q(\mu)=a(-\nu)a(\mu)+ \bigl(\text{linear combination of $a(-\nu')a(\mu')$ for $(\nu,\mu)\succ(\nu',\mu')$}   \bigr)\,. 
\end{align}
Let $x= \sum_{\nu,\mu} \lambda(\mu,\nu)p(\nu)q(\mu)$ as in 
(\ref{lincomb}) such that not all coefficients vanish. Pick a pair $(\nu_0,\mu_0)$ with $\lambda(\nu_0,\mu_0)\neq 0$ such that for each coarser pair the corresponding coefficient vanishes. Then, by (\ref{aa}), $x$ is a linear combination of the $a(-\nu)b(-\mu)$ such that the coefficient of $a(-\nu_0)a(\mu_0)$ equals $\lambda(\nu_0,\mu_0)$. Since the $a(-\nu)b(\mu)$ form a basis, $x\neq 0$ in $\Heis_V$. 
\subsection{Basic facts about equivariant functors}\label{equibasics}
For details on equivariant derived categories and functors between them, we refer to 
\cite[Sect.\ 4]{BKR}.
Let $G$ be a finite group acting on a smooth projective variety $M$. 
The \textit{equivariant derived category} $\D_G(M)$ has as objects pairs $(E,\lambda)$ where $E\in \D(M)$ and $\lambda$ is a $G$-linearisation of $E$. A \textit{$G$-linearisation} is a family of isomorphisms $(E\xrightarrow\cong g^*E)_{g\in G}$ in $\D(M)$ such that for every pair $g,h\in G$ the composition 
\[
 E\xrightarrow{\lambda_g}g^*E\xrightarrow{g^*\lambda_h}g^*h^*E\cong (hg)^*E
\]
equals $\lambda_{hg}$. The morphisms are morphisms in the ordinary derived category $\D(X)$ which are compatible with the linearisations. The category $\D_G(M)$ is equivalent to the bounded derived category $\D^b(\Coh_G(M))$ of $G$-equivariant coherent sheaves on $M$; see \cite{Chen} or \cite{Elagin}. 

For a subgroup $U\subset G$ there is the functor $\Res_G^U\colon \D_G(M)\to \D_U(M)$ given by restricting the linearisations. It has the \textit{inflation} functor $\Inf_U^G\colon \D_U(M)\to \D_G(M)$ as a left and right adjoint.
Let $U\setminus G$ be the left cosets and $E\in \D(X)$. Then 
\begin{align}\label{Infdef}
\Inf_U^G(E)\cong \bigoplus_{\sigma\in U\setminus G} \sigma^*E 
\end{align}
with the $G$-linearisation of $\Inf_U^G(E)$ given by a combination of the $U$-linearisation of $E$ and permutation of the direct summands.

Let $G$ act trivially on $M$. Then there is the functor $\triv\colon \D(M)\to \D_G(M)$ which equips every object with the trivial linearisation. It has the functor of invariants $(\_)^G\colon \D_G(M)\to \D(M)$ as a left and right adjoint. 

We use the following principle for the computation of invariants; see e.g.\ \cite[Sect.\ 3.5]{Kru4}.
Let $\cE=(E,\lambda)$ be a $G$-equivariant sheaf such that $E=\oplus_{i\in \cK}E_i$ for some finite index set $\cK$. Assume that there is an action of $G$ on $\cK$ such that $\lambda_g(E_i)=E_{g\cdot i}$ for all $i\in \cK$ and $g\in G$. We say that the linearisation $\lambda$ induces the action on the index set $\cK$. 
\begin{lemma}\label{Danlemma}
Let $\cR\subset \cK$ be a set of representatives of the $G$-orbits in $\cK$ under the action induced by the linearisation. Then $E^G\cong \bigoplus_{i\in \cR} E_i^{G_i}$ where $G_i\subset G$ is the stabiliser subgroup.
\end{lemma}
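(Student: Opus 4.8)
The plan is to reduce the computation to a single $G$-orbit in $\cK$ and there to identify the invariants with those of the stabiliser acting on a chosen summand. I would begin with the orbit decomposition of the index set: writing $\cK = \coprod_{i \in \cR} O_i$ as the disjoint union of the $G$-orbits $O_i = G \cdot i$, the hypothesis $\lambda_g(E_j) = E_{g \cdot j}$ shows that each block $E_{O_i} := \bigoplus_{j \in O_i} E_j$ is preserved by every $\lambda_g$ and is therefore a $G$-equivariant subobject; thus $E \cong \bigoplus_{i \in \cR} E_{O_i}$ equivariantly. Since $\Ik$ has characteristic zero and $G$ is finite, the invariants functor is computed by the Reynolds idempotent $e = \frac{1}{|G|}\sum_{g \in G}\lambda_g \in \End(E)$, which is additive and respects this block decomposition, so $E^G \cong \bigoplus_{i \in \cR} (E_{O_i})^G$. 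It then remains to prove $(E_{O_i})^G \cong E_i^{G_i}$ for a single orbit.

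For a fixed orbit $O = G \cdot i \cong G/G_i$, I would construct mutually inverse isomorphisms between $(E_O)^G$ and $E_i^{G_i}$. In one direction, compose the inclusion $(E_O)^G \hookrightarrow E_O$ with the projection $\pr_i \colon E_O \to E_i$. For $h \in G_i$ one has $h^{-1}\cdot i = i$, so the $E_i$-component of $\lambda_h(s)$ equals $\lambda_h(\pr_i s)$; combined with the invariance $\lambda_h(s)=s$ this shows $\pr_i s \in E_i^{G_i}$, so the map lands where claimed. In the other direction, choosing coset representatives $\{\sigma\}$ for $G/G_i$ and declaring the $E_{\sigma\cdot i}$-component of the output to be $\lambda_\sigma(t)$ for $t \in E_i^{G_i}$ produces an element of $E_O$; independence of the choice of $\sigma$ within a coset uses precisely the $G_i$-invariance of $t$, and the linearisation cocycle condition makes the resulting section $G$-invariant.

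The main obstacle, and essentially the only place requiring care, is the bookkeeping in this last step: I must verify that the assignment $t \mapsto (\lambda_\sigma t)_\sigma$ is independent of the coset representatives and genuinely $G$-invariant, which is where the cocycle identity for $\lambda$ and the definition of $G_i$ are used in tandem. That the two maps are mutually inverse then follows directly from transitivity of the $G$-action on $O$. No derived-category subtleties intervene, because for a finite group in characteristic zero taking invariants is exact and given by the explicit averaging projector, so the statement reduces to the evident representation-theoretic fact that the invariants of a permutation-type module on an orbit $G/G_i$ are the $G_i$-invariants of the inducing summand.
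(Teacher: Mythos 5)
Your proof is correct. There is nothing in the paper to compare it against: Lemma \ref{Danlemma} is stated without proof and deferred to the reference cited just before it, so your argument genuinely fills in the missing verification. Your route --- split $\cK$ into orbits, use the characteristic-zero Reynolds projector to see that invariants respect the block decomposition, and then exhibit explicit mutually inverse maps $(E_O)^G \leftrightarrow E_i^{G_i}$ via $\pr_i$ and $t \mapsto \bigl(\lambda_\sigma(t)\bigr)_{\sigma}$ --- is the hands-on version of the standard argument, and all the delicate points (independence of coset representatives via the cocycle condition and $G_i$-invariance of $t$, invariance of the assembled section, mutual inversion via transitivity) are handled correctly; since the lemma is stated for equivariant sheaves, your section-level reasoning is legitimate, and the maps you build are honest sheaf morphisms, being composed of summand inclusions, projections and the $\lambda_\sigma$. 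A slightly slicker packaging, which the paper's own formalism suggests, is to observe that the single-orbit block $E_{O_i}$ with its induced linearisation is exactly $\Inf_{G_i}^{G}(E_i)$ (compare (\ref{Infdef}), with $G$ acting trivially on $M$), and then note that $(\_)^{G}\circ \Inf_{G_i}^{G}$ and $(\_)^{G_i}$ are both right adjoint to $\Res_{G}^{G_i}\circ \triv_{G} = \triv_{G_i}$, hence canonically isomorphic; this buys functoriality for free and applies verbatim to complexes, but the identification $E_{O_i}\cong \Inf_{G_i}^{G}(E_i)$ still requires essentially the same bookkeeping you carry out explicitly, so the two proofs have comparable content.
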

\subsection{Combinatorial notations}\label{notations}
For a finite set $I$ we set $X^I:=\prod_{I}X\cong X^{|I|}$. For $J\subset I$ we denote by $\pr_J^I\colon X^I\to X^J$ the projection. 
We often drop the index $I$ in the notation and simply write $\pr_J$ when the source of the projection should be clear from the context.

For $n\in \N$, we set $\n:=[1,n]=\{1,2,\dots,n\}$.
By convention, $\underline 0:=\emptyset$.
Later, there will be a fixed number $N\in \N$ such that all occurring finite sets will be subsets of $\underline N$. Hence, we will often write $N$ instead of $\underline N$ to ease the notation. Furthermore,
for a subset $J\subset \underline N$ we set $\bar J:=\underline N\setminus J$. For $0\le n\le N$ we set $\ncomp:=\overline{\n}=\underline N\setminus \n=[n+1,N]$. 
\subsection{The functors $P^{(n)}_\beta$ and $Q^{(n)}_\alpha$}\label{PQdesc}
For every object $\beta\in \D(X)$ and $N\ge n$, the functor $P^{(n)}_{N,\beta}\colon \D_{\sym_{N-n}}(X^{N-n})\to \D_{\sym_N}(X^N)$ is given by the composition
\begin{align*}
&\D_{\sym_{N-n}}(X^{N-n})\xrightarrow{\triv} \D_{\sym_{n}\times \sym_{N-n}}(X^{N-n})\xrightarrow{\pr_{\ncomp}^{*}}\D_{\sym_{n}\times \sym_{N-n}}(X^{N})\\
&\xrightarrow{\otimes\pr_{\n}^{*}\beta^{\boxtimes n}} \D_{\sym_{n}\times \sym_{N-n}}(X^{N})\xrightarrow{\Inf_{\sym_{\n}\times \sym_{\ncomp}}^{\sym_N}} \D_{\sym_{N}}(X^{N}) 
\end{align*}
where we use the identifications $X^{\ncomp}\cong X^n$ and $X^{\ncomp}=X^{[n+1,N]}\cong X^{N-n}$. Hence, the right-adjoint $Q^{(n)}_{N,\beta}\colon \D_{\sym_{N}}(X^{N})\to \D_{\sym_{N-n}}(X^{N-n})$ is the composition 
\begin{align*}
&\D_{\sym_{N-n}}(X^{N-n})\xleftarrow{(\_)^{\sym_n}} \D_{\sym_{n}\times \sym_{N-n}}(X^{N-n})\xleftarrow{\pr_{\ncomp*}}\D_{\sym_{n}\times \sym_{N-n}}(X^{N})\\
&\xleftarrow{\otimes\pr_{\n}^{*}\beta^{\vee \boxtimes n}} \D_{\sym_{n}\times \sym_{N-n}}(X^{N})\xleftarrow{\Res_{\sym_{\n}\times \sym_{\ncomp}}^{\sym_N}} \D_{\sym_{N}}(X^{N})\,. 
\end{align*}
Note that $(\beta^\vee)^{\boxtimes n}\cong (\beta^{\boxtimes n})^\vee$ so that we simply write $\beta^{\vee \boxtimes n}$ without ambiguity. Since  
\begin{align*}
\sigma^*\bigl(\pr_{\n}^*\beta^\boxtimes\otimes \pr_{\ncomp }^* E\bigr)\cong \pr_{\sigma^{-1}(\n)}^*\beta^\boxtimes\otimes \pr_{\sigma^{-1}(\ncomp )}^* E\bigr) 
\end{align*}
for every $E\in \D_{\sym_{N-n}}(X^{N-n})$ and $\sigma\in \sym_n$, we get by (\ref{Infdef})
\begin{align}\label{Pformula}
P^{(n)}_{N,\beta}(E)\cong\bigoplus_{I\subset \underline N, |I|=n} \pr_I^*\beta^{\boxtimes n}\otimes \pr_{\bar I}^*E
\end{align}
Furthermore, the functor $Q^{(m)}_\alpha$ is given on objects $F\in \D_{\sym_N}(X^N)$ by
\begin{align}\label{Qformula}
 Q^{(m)}_\alpha(F)\cong \pr_{\mcomp*}(\pr_{\m}^* \alpha^{\vee\boxtimes m}\otimes F)^{\sym_{\m}}
\end{align}
\begin{remark}\label{flexible}
Let $I$ be any set of cardinality $N$ and $J\subset I$ be any subset of cardinality $m$. Making the identifications $X^I\cong X^N$, $X^J\cong X^m$, and $X^{I\setminus J}\cong X^{N-m}$ we can also write
\begin{align*}
 Q^{(m)}_{N,\alpha}(F)\cong \pr_{I\setminus J*}^I(\pr_{J}^{I*} \alpha^{\vee\boxtimes m}\otimes F)^{\sym_{J}}\,.
\end{align*}
\end{remark}
\subsection{Proof of relation (\ref{trivialfunctorrel})}\label{mainproof1}
In order to verify the relations (\ref{trivialfunctorrel}) of Theorem \ref{mainthm} it is sufficient to show the first relation, i.e. $Q^{(m)}_\alpha Q^{(n)}_\beta\cong Q^{(n)}_\beta Q^{(m)}_\alpha$ for $\alpha,\beta\in \D(X)$ and $m,n\in \N$. 
The relation $P^{(m)}_\alpha P^{(n)}_\beta\cong P^{(n)}_\beta P^{(m)}_\alpha$ follows by adjunction.
This means that we have to show that 
\[
 Q^{(m)}_{N-n,\alpha} Q^{(n)}_{N,\beta}\cong Q^{(n)}_{N-m,\beta} Q^{(m)}_{N,\alpha}\,\colon \D_{\sym_N}(X^N)\to \D_{\sym_{N-m-n}}(X^{N-m-n}) \quad\text{for every $N\ge n+m$.}
\]
Indeed, using Remark \ref{flexible}, the projection formula along  $\pr^N_{\ncomp}$, the projection formula along $\pr_{\overline{[n+1,n+m]}}^N$, and finally Remark \ref{flexible} again, we get isomorphisms 
\begin{align*}
& Q^{(m)}_{N-n,\alpha} Q^{(n)}_{N,\beta}(F)\\
\cong&
\Bigl[\pr_{\mncomp*}^{\ncomp}\Bigl(\pr_{[n+1,n+m]}^{\ncomp*}\alpha^{\vee\boxtimes m}\otimes \bigl[\pr_{\ncomp*}^{N}(\pr_{\n}^{N*}\beta^{\vee\boxtimes n}\otimes F  )  \bigr]^{\sym_{\n}}  \Bigr)\Bigr]^{\sym_{[n+1,n+m]}}\\
\cong&
\Bigl[\pr_{\mncomp*}^{N}\bigl(\pr_{\n}^{N*}\beta^{\vee\boxtimes n}\otimes \pr_{[n+1,n+m]}^{N*}\alpha^{\vee\boxtimes m}\otimes F     \bigr)\Bigr]^{\sym_{\n}\times \sym_{[n+1,n+m]}} 
\\
\cong&
\Bigl[\pr_{\mncomp*}^{\overline{[n+1,n+m]}*}\Bigl(\pr_{\n}^{\overline{[n+1,n+m]}*}\beta^{\vee\boxtimes n}\otimes \bigl[\pr_{\overline{[n+1,n+m]}*}^{N}(\pr_{[n+1,n+m]}^{N*}\alpha^{\vee\boxtimes m}\otimes F  )  \bigr]^{\sym_{[n+1,n+m]}}  \Bigr)\Bigr]^{\sym_{\n}}\\
\cong& Q^{(n)}_{N-m,\beta} Q^{(m)}_{N,\alpha}(F)
\end{align*}
which are functorial in $F\in \D_{\sym_N}(X^N)$.
\subsection{Proof of relation (\ref{nontrivialfunctorrel})}\label{mainproof2}
Let $N\ge \max\{m,n\}$ and $E\in \D_{\sym_{N-n}}(X^{N-n})$. 
Combining (\ref{Pformula}) and (\ref{Qformula}) we get  
\begin{align*}
Q^{(m)}_\alpha P^{(n)}_\beta (E)\cong \bigl[\bigoplus_{I\in \cK} T_I    \bigr]^{\sym_{\m}}
\end{align*}
where
\begin{align*}
\mathcal K=\{I\subset \underline N\,,\, |I|=n\}\quad\text{and}\quad T_I=\pr_{\mcomp*}\bigl(\pr_{\m}^*\alpha^{\vee\boxtimes m}\otimes \pr_I^*\beta^{\boxtimes n}\otimes \pr_{\bar I}^*E   \bigr)\,. 
\end{align*}
The $\sym_{\m}$-linearisation of $\otimes_{I\in \cK} T_I$ induces on the index set $\mathcal K$ the action $\sigma\cdot I=\sigma(I)$. A set of representatives of the $\sym_{\m}$-orbits in $\mathcal K$ is given by
\[\cR=\{I=\uk\cup J\mid n+m-N\le k\le \min\{m,n\}\,,\, J\subset \mcomp \,,\, |J|=n-k\}\,.\]
Thus, Lemma \ref{Danlemma} yields
\begin{align}\label{decompinva}
 Q^{(m)}_\alpha P^{(n)}_\beta (E)\cong \bigoplus_{I=\uk\cup J\in \cR} T_I^{\sym_{\uk}\times \sym_{[k+1,m]}}\,.
\end{align}
Let $I=\uk\cup J\in \cR$ and consider the diagram
\begin{align}\label{largediag}
 \xymatrix{
  &  X^{\mcomp} \ar_{\pr_{\mcomp\setminus J}^{\mcomp}}[dl] \ar^{\pr_{J}^{\mcomp}}[rr]   &  &  X^{\mcomp\cap I}=X^J  \\
 X^{\Icomp\cap\mcomp}=X^{\mcomp\setminus J}  & X^{\kcomp}=X^{\mcomp\cup \Icomp} \ar^{\pr_{\mcomp}^{\kcomp}}[u]\ar_{\pr_{\Icomp}^{\kcomp}}[d]  & X^N \ar_{\pr_{\mcomp}^{N}}[ul] \ar^{\quad\pr_{\kcomp}^{N}}[l] \ar^{\pr_{\Icomp}^{N}}[dl] \ar^{\pr_{\uk}^{N}\quad}[r]  & X^{\m\cap I}=X^{\uk}  \\
  & X^{\Icomp} \ar^{\pr_{\mcomp\setminus J}^{\Icomp}}[ul] \ar^{\pr_{[k+1,m]}^{\Icomp}}[rr] &  & X^{\m\cap \Icomp}=X^{[k+1,m]}\,.
}
\end{align}
The two triangles in the middle of the diagram are commutative and the square on the left is cartesian. We can rewrite $T_I=T_{\uk\cup J}$ as
\begin{align*}
 T_I\cong \pr_{\mcomp *}^{\kcomp} \pr_{\kcomp *}^{N} \left[\pr_{\kcomp}^{N*}\Bigl(\pr_{\mcomp}^{\kcomp*}\pr_{J}^{\mcomp*}
\beta^{\boxtimes n-k}\otimes \pr_{\Icomp}^{\kcomp*}\bigl(E\otimes \pr_{[k+1,m]}^{\Icomp *}\alpha^{\vee\boxtimes m-k}\bigr)   \Bigr)    
\otimes \pr_{\uk}^{N*}(\alpha^\vee\otimes \beta)^{\boxtimes k} \right]\,.
\end{align*}
By the projection formula along $\pr_{\kcomp}^N$, we get
\begin{align*}
 T_I\cong \pr_{\mcomp *}^{\kcomp}  \left[\pr_{\mcomp}^{\kcomp*}\pr_{J}^{\mcomp*}
\beta^{\boxtimes n-k}\otimes \pr_{\Icomp}^{\kcomp*}\bigl(E\otimes \pr_{[k+1,m]}^{\Icomp *}\alpha^{\vee\boxtimes m-k}\bigr)       
\otimes \pr_{\kcomp* }^{N}\pr_{\uk}^{N*}(\alpha^\vee\otimes \beta)^{\boxtimes k} \right]\,.
\end{align*}
By the fact that $X^N= X^{\uk}\times X^{\kcomp}$ and K\"unneth formula
\begin{align*}
\pr_{\kcomp* }^{N}\pr_{\uk}^{N*}(\alpha^\vee\otimes \beta)^{\boxtimes k}\cong \reg_{X^{\kcomp}}\otimes_{\Ik} \glob\bigl((\alpha^\vee\otimes \beta)^{\boxtimes k}\bigr)&\cong  \reg_{X^{\kcomp}}\otimes_{\Ik} \Hom^*(\alpha^{\boxtimes k},\beta^{\boxtimes k})\\&\cong
\reg_{X^{\kcomp}}\otimes_{\Ik} \Hom^*(\alpha,\beta)^{\otimes k}\,. 
\end{align*}
This gives 
\begin{align*}
 T_I\cong\Hom^*(\alpha,\beta)^{\otimes k}\otimes_\Ik \tilde T_I\quad, \quad \tilde T_I=\pr_{\mcomp *}^{\kcomp}  \left[\pr_{\mcomp}^{\kcomp*}\pr_{J}^{\mcomp*}
\beta^{\boxtimes n-k}\otimes \pr_{\Icomp}^{\kcomp*}\bigl(E\otimes \pr_{[k+1,m]}^{\Icomp *}\alpha^{\vee\boxtimes m-k}\bigr)       
 \right]\,.
\end{align*}
By the projection formula along $\pr_{\mcomp}^{\kcomp}$ and base change along the cartesian square in (\ref{largediag}), 
\begin{align*}
\tilde T_I&\cong   \pr_{J}^{\mcomp*}
\beta^{\boxtimes n-k}\otimes \pr_{\mcomp*}^{\kcomp}\pr_{\Icomp}^{\kcomp*}\bigl(E\otimes \pr_{[k+1,m]}^{\Icomp *}\alpha^{\vee\boxtimes m-k}\bigr)\\
&\cong 
  \pr_{J}^{\mcomp*}
\beta^{\boxtimes n-k}\otimes \pr_{\mcomp\setminus J}^{\mcomp*}\pr_{\mcomp\setminus J*}^{\Icomp}\bigl(E\otimes \pr_{[k+1,m]}^{\Icomp *}\alpha^{\vee\boxtimes m-k}\bigr)
\end{align*}
In summary, 
\begin{align*}
T_I\cong\Hom^*(\alpha,\beta)^{\otimes k}\otimes_\Ik \pr_{J}^{\mcomp*}
\beta^{\boxtimes n-k}\otimes \pr_{\mcomp\setminus J}^{\mcomp*}\pr_{\mcomp\setminus J*}^{\Icomp}\bigl(E\otimes \pr_{[k+1,m]}^{\Icomp *}\alpha^{\vee\boxtimes m-k}\bigr)\,. 
\end{align*}
Taking $\sym_{\uk}\times \sym_{[k+1,m]}$-invariants yields
\begin{align*}
T_I^{\sym_{\uk}\times \sym_{[k+1,m]}}\cong S^k\Hom^*(\alpha,\beta)\otimes_\Ik \pr_{J}^{\mcomp*}
\beta^{\boxtimes n-k}\otimes \pr_{\mcomp\setminus J}^{\mcomp*}\Bigl[\pr_{\mcomp\setminus J*}^{\Icomp}\bigl(E\otimes \pr_{[k+1,m]}^{\Icomp *}\alpha^{\vee\boxtimes m-k}\bigr)\Bigr]^{\sym_{[k+1,m]}}. 
\end{align*}
Plugging this into (\ref{decompinva}) gives
\begin{align*}
 &Q_\alpha^{(m)}P_\beta^{(n)}(E) \cong \bigoplus_{k=n+m-N}^{\min\{m,n\}}S^k\Hom^*(\alpha,\beta)\otimes \widehat T_k
\end{align*}
where 
\[\widehat T_k:=\bigoplus_{J\subset \mcomp,\, |J|=n-k}\pr_{J}^{\mcomp*}
\beta^{\boxtimes n-k}\otimes \pr_{\mcomp\setminus J}^{\mcomp*}\bigl[\pr_{\mcomp\setminus J*}^{\Icomp}(E\otimes \pr_{[k+1,m]}^{\Icomp *}\alpha^{\vee\boxtimes m-k})\bigr]^{\sym_{[k+1,m]}}\,.\]
Using Remark \ref{flexible}, 
we see that $\widehat T_k\cong P^{(n-k)}_{N-m,\beta} Q^{(m-k)}_{N-n,\alpha}(E)$. Hence,
\begin{align*}
 Q_\alpha^{(m)}P_\beta^{(n)}(E) \cong \bigoplus_{k=n+m-N}^{\min\{m,n\}}S^k\Hom^*(\alpha,\beta)\otimes_\Ik P^{(n-k)}_{N-m,\beta} Q^{(m-k)}_{N-n,\alpha}(E)\,.
\end{align*}
Since all the isomorphisms used above are functorial, this shows the isomorphism (\ref{nontrivialfunctorrel}).
\section{Further remarks}
\subsection{The Fock space}\label{Fock}
The \textit{Fock space} representation of the Heisenberg algebra is defined as the quotient $\Fock_V:=\Heis_V/I$ where $I$ is the left ideal generated by all the $a_\beta(n)$ with $n>0$. 
Let $\mathbf 1\in \Fock_V$ be the class of $1\in \Heis_V$. 
The Fock space is an irreducible representation. Hence, given any representation $M$ of $\Heis_V$ together with an element $0\neq x\in M$ such that $a_\beta(n)\cdot x=0$ for all $\beta\in V$ and $n>0$, the map
\[\Fock_V\to M\quad,\quad \mathbf 1\mapsto x\]
gives an embedding of $\Heis_V$-representations. 

Let $\mathbb K=\bigoplus_{\ell\ge 0} \K_{\sym_\ell}(X^\ell)$ be the representation described in Section \ref{constr}. Then every 
non-zero $x\in \K_{\sym_0}(X^0)\cong \K(\text{point})\cong \mathbb Q$ is annihilated by all the $q^{(n)}_\beta$ and hence by the $a_\beta(n)$ for $n>0$.
Thus, the Fock space can be realised as a subrepresentation of $\mathbb K$.

In general, it is a proper subrepresentation, i.e. $\Fock_{\K(X)}\subsetneq \mathbb K$. However, if $\K(X)$ is of finite dimension and the exterior product $\K(X)^{\otimes n}\to \K(X^n)$ is an isomorphism, for example if $X$ has a cellular decomposition, we do have the equality $\Fock_{\K(X)}= \mathbb K$. In this case
\[
\mathbb K_\ell:= \K_{\sym_\ell}(X^\ell)\cong \bigoplus_{\nu \text{ partition of } \ell} \bigl(S^{\nu_1}\K(X)\bigr)\otimes \bigl(S^{\nu_2} \K(X)\bigr)\otimes \dots
\]
as follows from the general decomposition of equivariant K-theory described in \cite{Vistdecomp}. Thus, the dimensions of $\Fock_{\K(X)}$ and $\mathbb K$ agree in every degree $\ell$.    

\subsection{Transposed generators}\label{transposed}

There is yet another set of generators of the Heisenberg algebra, namely $p^{(1^n)}_{\beta}$, $q^{(1^n)}_\beta$ defined by 
\begin{align*}
\sum_{n\ge 0}p^{(1^n)}_\beta z^n=\exp\left( -\sum_{\ell \ge 1} \frac{a_\beta(-\ell)}{\ell} z^\ell\right)\quad,\quad \sum_{n\ge 0}q^{(1^n)}_\beta z^n=\exp\left(-\sum_{\ell \ge 1} \frac{a_\beta(\ell)}{\ell} z^\ell\right)\,;
\end{align*}
compare \cite[Sect.\ 2.2.2]{CL}. The relations among these generators are exactly the same as those between the $p^{(n)}_{\beta}$ and $q^{(n)}_\beta$. Moreover, one can mix these two sets of generators to get the set of generators consisting of $p^{(n)}_\beta$ and $q^{(1^n)}_\beta$ (or, alternatively, $p^{(1^n)}_\beta$ and $q^{(n)}_\beta$). These are the generators used by Khovanov in \cite{Khov} for his construction of a categorification of the Heisenberg algebra $\Heis_V$ in the case that $V=\Q$. The relations between these generators are 
\begin{align*}
[q^{(1^m)}_\alpha,q^{(1^n)}_\beta]=0=[p^{(m)}_\alpha,p^{(n)}_\beta]\quad,
\quad
q^{(m)}_\alpha p^{(n)}_\beta=\sum_{k=0}^{\min\{m,n\}} s^k(-\langle \alpha,\beta\rangle) \cdot p^{(n-k)}_\beta q^{(m-k)}_\alpha\,. 
\end{align*}
For $\beta\in\D(X)$ and $n>0$, we set $P_\beta^{(1^n)}:=\bigoplus_{N\ge n} P^{(1^n)}_{N,\beta}$ with
\begin{align*}
P^{(1^n)}_{N,\beta}\colon \D_{\sym_{N-n}}(X^{N-n})\to \D_{\sym_{N}}(X^{N})\quad,\quad E\mapsto \Inf_{\sym_n\times \sym_{N-n}}^{\sym_N}\bigl((\beta^{\boxtimes n}\otimes \alt_n)\boxtimes E  \bigr)\,. 
\end{align*}
Here, $\alt_n$ denotes the non-trivial character of $\sym_n$. 
Again, $Q_\beta^{(1^n)}$ is defined as the right-adjoint of $P^{(1^n)}_\beta$.
Using the ``anti-symmetric K\"unneth formula''
\[
 \Hom^*_{\D_{\sym_k}(X^k)}(\beta^{\boxtimes k}\otimes \alt_k,\alpha^{\boxtimes k})\cong \wedge^k \Hom^*(\beta,\alpha)\,,
\]
one can prove in complete analogy to Sections \ref{mainproof1} and \ref{mainproof2} the relations 
\begin{align}
& Q^{(1^m)}_\alpha Q^{(1^n)}_\beta\cong Q^{(1^n)}_\beta Q^{(1^m)}_\alpha\quad,\quad P^{(m)}_\alpha P^{(n)}_\beta\cong P^{(n)}_\beta P^{(m)}_\alpha \label{t1} \\
& Q^{(1^m)}_\beta P^{(n)}_\alpha\cong \bigoplus_{k=0}^{\min\{m,n\}} \wedge^k\Hom^*(\beta,\alpha)\otimes_\Ik P^{(n-k)}_\alpha Q^{(1^{m-k})}_\beta\,. \label{t2}  
\end{align}
One can reformulate Lemma \ref{symmetricEuler} as the formula
$\chi(\wedge^k W^*)=s^k(-\chi(W^*))$ for $W^*$ a graded vector space.
Thus, relations (\ref{t1}) and (\ref{t2}) show that the $P^{(n)}_\beta$ and $Q^{(1^n)}_\beta$ give again a categorical Heisenberg action, this time lifting the generators $p_\beta^{(n)}$ and $q_\beta^{(1^n)}$. The special case that $X$ is a point is exactly the construction of \cite{Khov}.     
\subsection{Generalisations and variants}\label{gen}
Since the construction (\ref{Pdefin}) of the functors $P^{(n)}_\beta$ is very simple and formal, it generalises well to other settings beside smooth complete varieties. 

Let $X$ be a non-complete smooth variety. Then Theorem \ref{mainthm} continues to hold if we assume that the objects $\alpha$ and $\beta$ have complete support, i.e. $\alpha,\beta\in \D^c(X)$, which ensures that $\Hom^*(\alpha,\beta)$ is a finite dimensional graded vector space. 
Thus, we get an induced action of the Heisenberg algebra associated to $\K^c(X):=\K(\D^c(X))$ on $\mathbb K$ (note that $\mathbb K$ needs not necessarily be replaced by Grothendieck groups with finite support).

Similarly, if one wants to also drop the smoothness assumption one needs $\alpha$ and $\beta$ to be perfect objects with complete support.

Furthermore, one can replace the variety $X$ by a Deligne--Mumford stack $\cX$. The case that $\cX=[X/\C^*]$ for $X$ the affine plane or a minimal resolution of a Kleinian singularity gives a Heisenberg action on the equivariant Grothendieck groups 
$\bigoplus_{\ell\ge 0} \K_{\C*}(\Hilb^\ell(X))$ as in \cite{SV}, \cite{FT}, and \cite{CL}.    
\subsection{Open problems}\label{open}
In \cite{GKsym}, the symmetric product $S^n\cT$ of a category $\cT$ is defined in such a way that $S^n(\D(X))\cong \D_{\sym_n}(X^n)$ for a smooth projective variety $X$. Thus, one may hope that our construction can be generalised to a setting where $\D(X)$ is replaced by any (Hom-finite and symmetric monoidal) dg-enhanced triangulated category $\cT$.

Note that in \cite{Khov} and \cite{CL} a categorification of the Heisenberg action is given in a much stronger sense than in this paper. In particular, 2-categories whose Grothendieck groups are isomorphic to the Heisenberg algebra are constructed. Clearly, one would like to do the same in our more general setting too.

\bibliographystyle{alpha}
%\addcontentsline{toc}{chapter}{References}
\bibliography{references}

\end{document}